\newtheorem{theorem}{Theorem}[section]
\newtheorem{lemma}[theorem]{Lemma}
\newtheorem{proposition}[theorem]{Proposition}
\newtheorem{cor}[theorem]{Corollary}
\theoremstyle{definition}
\theoremstyle{remark}
\newtheorem{remark}[theorem]{\bf{Remark}}
\numberwithin{equation}{section}
\begin{document}

\title [Improvement of numerical radius inequalities]{\Small{ Improvement of numerical radius inequalities}}

 \author[P. Bhunia, K. Paul] {Pintu Bhunia,  Kallol Paul}

 \address{(P. Bhunia) Department of Mathematics, Jadavpur University, Kolkata 700 032, West Bengal, India}
 \email{pintubhunia5206@gmail.com}
\email{pbhunia.math.rs@jadavpuruniversity.in} 
\thanks{The first author sincerely acknowledges the financial support received from UGC, Govt. of India in the form of Senior Research Fellowship under the mentorship of Prof Kallol Paul.}

 \address{(K. Paul) Department of Mathematics, Jadavpur University, Kolkata 700 032, West Bengal, India}
 \email{kalloldada@gmail.com}
 \email{kallol.paul@jadavpuruniversity.in}

%\MakeLowercase{and}

\renewcommand{\subjclassname}{\textup{2020} Mathematics Subject Classification}\subjclass[]{Primary 47A12, Secondary 15A60, 47A30, 47A50}
\keywords{Numerical radius; Operator norm;  Bounded linear operator; Hilbert space.}

\maketitle

\begin{abstract}
We develop upper and lower bounds for the numerical radius of $2\times 2$ off-diagonal operator matrices, which generalize and improve on the existing ones. We also show  that if $A$ is a bounded linear operator on a complex Hilbert space and $|A|$ stands for the positive square root of $A$, i.e., $|A|=(A^*A)^{1/2}$, then for all $r\geq 1$,
$w^{2r}(A) \leq \frac{1}{4} \big \| |A|^{2r}+|A^*|^{2r} \big \| + \frac{1}{2} \min\left\{ \big \|\Re\big(|A|^r\, |A^*|^r \big) \big \|, w^r(A^2) \right\} $ where $w(A)$, $\|A\|$ and $\Re(A)$, respectively, stand for the numerical radius, the operator norm and the real part of $A$. This (for $r=1$) improves on existing well-known numerical radius inequalities.

\end{abstract}

\section{Introduction}

Let $\mathscr{H}$ be a complex Hilbert space with the inner product $\langle \cdot,\cdot \rangle $ and the corresponding norm $\|\cdot\|$ induced by the inner product. Let $ \mathbb{B}(\mathscr{H})$ denote the $C^*$-algebra of all bounded linear operators on $\mathscr{H}$ with the identity operator  $I$ and the zero operator $O$. Let $A\in \mathbb{B}(\mathscr{H})$. We denote by $|A|=({A^*A})^{\frac{1}{2}}$ the positive square root of $A$, and  $\Re(A)=\frac{1}{2}(A+A^*)$ and $\Im(A)=\frac{1}{2\rm i}(A-A^*)$, respectively, stand for the real and imaginary part of $A$. The numerical range of $A$, denoted by $W(A)$, is defined as $W(A)=\left \{\langle Ax,x \rangle: x\in \mathscr{H}, \|x\|=1 \right \}.$
We denote by $\|A\|$ and $w(A)$ the operator norm and the numerical radius of $A$, respectively, and are defined as  $$\|A\|=\sup  \left \{\| Ax\|: x\in \mathscr{H}, \|x\|=1 \right \}$$ and $$w(A)=\sup \left \{|\langle Ax,x \rangle|: x\in \mathscr{H}, \|x\|=1 \right \}.$$ 
It is well-known that the numerical radius $ w(\cdot)$ defines a norm on $\mathbb{B}(\mathscr{H})$ and is equivalent to the operator norm $\|\cdot\|$. In fact, the following double inequality holds:
\begin{eqnarray}\label{eqv}
\frac{1}{2} \|A\|\leq w({A})\leq\|A\|.
\end{eqnarray}
The inequalities in (\ref{eqv}) are sharp. The first inequality becomes equality if $A^2=O$, and the second one turns into equality if $A$ is normal. For various refinements of (\ref{eqv}), we refer the reader to \cite{BBP_aofa, BP_adm, P18, PK_rm, PSK, YAM1} and references therein. In particular, Kittaneh \cite{K05} improved the inequalities in (\ref{eqv}) by establishing that 
\begin{eqnarray}\label{k5}
\frac{1}{4}\|A^*A+A{A}^*\|\leq w^2({A})\leq\frac{1}{2}\|A^*A+A{A}^*\|.
\label{d}\end{eqnarray}
\smallskip
 Kittaneh \cite{K03} also improved the upper bound of $w(T)$ in (\ref{eqv}) to show that 
\begin{eqnarray}\label{k3}
 w({A})\leq\frac{1}{2}\left( \|A\|+\|A^2\|^{1/2} \right ).
\end{eqnarray}
Further, Abu-Omar and Kittaneh \cite{AK15} obtained the following inequality which refines both the upper bounds in  (\ref{k5})  and  (\ref{k3}):
\begin{eqnarray}\label{k15}
 w^2({A})\leq\frac{1}{4}\|A^*A+A{A}^*\|+\frac{1}{2}w(A^2).
\end{eqnarray}
Recently, Bhunia and Paul \cite{PK} also improved both the upper bounds  in (\ref{k5})  and  (\ref{k3}) by developing that
 \begin{eqnarray}\label{pk21}
 w^2({A})\leq\frac{1}{4}\|A^*A+A{A}^*\|+\frac{1}{2}w(|A||A^*|).
 \end{eqnarray}
 
 In this paper,  we derive  inequalities for the bounds of  the numerical radius which generalize and improve on both in (\ref{k15})  and  (\ref{pk21}). Further, we obtain a lower bound for the numerical radius which generalizes and improves on the existing ones. Applications of the obtained inequalities are also given.

\section{Main Results}

We begin our work with noting that for $A,B\in \mathbb{B}(\mathscr{H})$, the $2\times 2$ off-diagonal operator matrix $\begin{pmatrix}
O&A\\ B&O 
\end{pmatrix}\in \mathbb{B}(\mathscr{H}\oplus \mathscr{H})$  and the numerical radius of the matrix is denoted by $w\begin{pmatrix}
O&A\\ B&O 
\end{pmatrix}$. To achieve our aim in this paper we need the following four lemmas. First lemma follows easily.

\begin{lemma}\label{lem1}
	If $A,B\in \mathbb{B}(\mathscr{H})$, then
	$$w\begin{pmatrix}
	A&O\\ O&B 
	\end{pmatrix}=\max \Big \{w(A), w(B)  \Big\}.$$
\end{lemma}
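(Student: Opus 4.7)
The plan is to prove the equality by establishing the two matching one-sided bounds, using the natural parametrization of unit vectors in $\mathscr{H}\oplus\mathscr{H}$ as pairs $(x,y)$ satisfying $\|x\|^2+\|y\|^2=1$.

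For the upper bound $w\begin{pmatrix}A&O\\O&B\end{pmatrix}\le \max\{w(A),w(B)\}$, I would take an arbitrary such unit pair $(x,y)$ and compute directly
$$\left\langle \begin{pmatrix}A&O\\O&B\end{pmatrix}\begin{pmatrix}x\\y\end{pmatrix}, \begin{pmatrix}x\\y\end{pmatrix}\right\rangle = \langle Ax,x\rangle + \langle By,y\rangle.$$
Applying the triangle inequality and then the homogeneity trick $|\langle Ax,x\rangle|=\|x\|^2\bigl|\langle A(x/\|x\|),x/\|x\|\rangle\bigr|\le \|x\|^2 w(A)$ (valid whenever $x\ne 0$, and trivial otherwise), and similarly for the second term, yields
$$\bigl|\langle Ax,x\rangle + \langle By,y\rangle\bigr|\le \|x\|^2 w(A) + \|y\|^2 w(B) \le \max\{w(A),w(B)\},$$
since $\|x\|^2+\|y\|^2=1$ makes the right-hand side a convex combination. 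Taking the supremum over $(x,y)$ gives the desired bound.

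For the reverse inequality, I would simply test against the restricted unit vectors $(x,0)$ with $\|x\|=1$ and $(0,y)$ with $\|y\|=1$: these produce $\langle Ax,x\rangle$ and $\langle By,y\rangle$ respectively, and so the supremum defining $w\begin{pmatrix}A&O\\O&B\end{pmatrix}$ dominates both $w(A)$ and $w(B)$, hence their maximum.

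There is no serious obstacle: the only subtlety is the convex-combination argument that turns the weighted sum $\|x\|^2 w(A)+\|y\|^2 w(B)$ into the maximum, together with the standard homogeneity manoeuvre to normalize $x$ and $y$ before invoking the definition of numerical radius. Combining both inequalities yields the stated equality.
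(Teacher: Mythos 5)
Your proof is correct; the paper omits a proof entirely (it only says the lemma ``follows easily''), and your argument --- the convex-combination bound $\|x\|^2 w(A)+\|y\|^2 w(B)\le\max\{w(A),w(B)\}$ for the upper estimate and testing on $(x,0)$, $(0,y)$ for the lower one --- is exactly the standard computation the authors have in mind. No issues.
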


Second lemma deals with positive operators, and  is known as McCarthy inequality.

\begin{lemma}$($\cite[p. 20]{simon}$)$.\label{lem2}
	If $A\in \mathbb{B}(\mathscr{H})$ is positive, then for $r\geq 1$ 
	\[\langle Ax,x\rangle^r\leq \langle A^rx,x\rangle,\]  
	for all $x \in \mathscr{H}$ with $\|x\|=1.$
\end{lemma}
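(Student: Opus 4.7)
The plan is to reduce the inequality to Jensen's inequality via the spectral theorem. Since $A\in \mathbb{B}(\mathscr{H})$ is positive, its spectrum $\sigma(A)$ is a compact subset of $[0,\infty)$, and the continuous functional calculus provides a projection-valued resolution of the identity $E(\cdot)$ on $\sigma(A)$ such that $f(A)=\int_{\sigma(A)} f(\lambda)\, dE(\lambda)$ for every continuous function $f$ on $\sigma(A)$. In particular, for $r\geq 1$ the operator $A^r$ is defined through the continuous function $t\mapsto t^r$, so the statement makes sense even when $r$ is not an integer.

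For a fixed unit vector $x\in \mathscr{H}$, I would introduce the scalar spectral measure $d\mu_x(\lambda)=d\langle E(\lambda)x,x\rangle$ on the Borel subsets of $\sigma(A)$. Since $\|x\|=1$ and $E(\sigma(A))=I$, the measure $\mu_x$ is a Borel probability measure, and the functional calculus yields
\[
\langle Ax,x\rangle=\int_{\sigma(A)} \lambda\, d\mu_x(\lambda) \qquad\text{and}\qquad \langle A^r x, x\rangle=\int_{\sigma(A)} \lambda^r\, d\mu_x(\lambda).
\]
This turns the operator-theoretic claim into a purely measure-theoretic statement: it suffices to show $\left(\int \lambda\, d\mu_x\right)^r \leq \int \lambda^r\, d\mu_x$.

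The final step is Jensen's inequality applied to the function $\varphi(t)=t^r$, which is convex on $[0,\infty)$ precisely because $r\geq 1$, together with the fact that $\mu_x$ is a probability measure on $[0,\infty)$:
\[
\varphi\!\left(\int \lambda\, d\mu_x(\lambda)\right) \leq \int \varphi(\lambda)\, d\mu_x(\lambda),
\]
which is exactly $\langle Ax,x\rangle^r\leq \langle A^r x,x\rangle$. As a cross-check, in the special case $r=2$ one recovers the inequality directly from Cauchy--Schwarz: $\langle Ax,x\rangle = \|A^{1/2}x\|^{2} \leq \|Ax\|\,\|x\| = \langle A^{2}x,x\rangle^{1/2}$, whence squaring gives the result.

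I do not anticipate any serious obstacle; the only points requiring care are ensuring that $A^{r}$ is interpreted via the continuous functional calculus and verifying that $\mu_x$ really is a probability measure (which is immediate from $\|x\|=1$). Everything else is a textbook application of Jensen's inequality.
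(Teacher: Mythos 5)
Your proof is correct. The paper states this lemma as a quoted result (citing Simon's book, p.~20) and supplies no proof of its own, so there is no in-paper argument to compare against; your route through the spectral theorem --- passing to the scalar probability measure $\mu_x(\cdot)=\langle E(\cdot)x,x\rangle$ and applying Jensen's inequality to the convex function $t\mapsto t^r$ on the compact set $\sigma(A)\subset[0,\infty)$ --- is the standard textbook proof of the McCarthy inequality, and your Cauchy--Schwarz cross-check for $r=2$ is also valid.
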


Third lemma deals with  vectors in $\mathscr{H}$, and is known as Buzano's inequality.

\begin{lemma}$($\cite{BU}$)$\label{lem3}
	If $x,y,e\in \mathscr{H}$ with $\|e\|=1,$ then 
	\[|\langle x,e\rangle \langle e,y\rangle|\leq \frac{1}{2}\Big(\|x\| \|y\|+|\langle x,y\rangle|\Big).\]
\end{lemma}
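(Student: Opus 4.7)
My plan is to recognize the left-hand side as a matrix element of the orthogonal rank-one projection onto the span of $e$. Define $P\in \mathbb{B}(\mathscr{H})$ by $Pv = \langle v, e\rangle e$; since $\|e\|=1$, this is an orthogonal projection, and a direct computation gives $\langle Px, y\rangle = \langle x, e\rangle \langle e, y\rangle$. Hence the quantity to be bounded is exactly $|\langle Px, y\rangle|$.

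The key trick I would use is to pass to the shifted operator $T := P - \tfrac{1}{2}I$. Because $P$ is an orthogonal projection, it is self-adjoint with spectrum $\{0,1\}$, so $T$ is self-adjoint with spectrum $\{-\tfrac{1}{2}, \tfrac{1}{2}\}$ and $\|T\| = \tfrac{1}{2}$. The standard operator bound $|\langle Tx, y\rangle| \leq \|T\|\,\|x\|\,\|y\|$ then reads
\[
\left|\langle x,e\rangle\langle e,y\rangle - \tfrac{1}{2}\langle x,y\rangle\right| \leq \tfrac{1}{2}\|x\|\|y\|,
\]
and the triangle inequality immediately yields the advertised estimate $|\langle x,e\rangle\langle e,y\rangle| \leq \tfrac{1}{2}(\|x\|\|y\| + |\langle x,y\rangle|)$.

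I do not anticipate a serious obstacle, since everything reduces to the single observation that subtracting $\tfrac12 I$ from a rank-one orthogonal projection halves its norm. If an operator-theoretic framing felt inelegant, I could alternatively decompose $x = \langle x,e\rangle e + x_\perp$ and $y = \langle y,e\rangle e + y_\perp$ with $x_\perp, y_\perp \perp e$, use the identity $\langle x,e\rangle\langle e,y\rangle = \langle x,y\rangle - \langle x_\perp, y_\perp\rangle$, and combine it with the two-term Cauchy--Schwarz estimate $\|x\|\|y\| \geq |\langle x,e\rangle|\,|\langle e,y\rangle| + \|x_\perp\|\|y_\perp\|$ obtained from $\|x\|^2 = |\langle x,e\rangle|^2 + \|x_\perp\|^2$ and the analogous identity for $y$; eliminating $\|x_\perp\|\|y_\perp\|$ between the two bounds and rearranging gives the same conclusion.
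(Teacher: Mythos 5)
The paper does not prove this lemma at all---it is quoted from Buzano's original article \cite{BU} and used as a black box---so there is no in-paper argument to compare against. Your proposal is a correct, self-contained proof. The identity $\langle Px,y\rangle=\langle x,e\rangle\langle e,y\rangle$ for the rank-one projection $P v=\langle v,e\rangle e$ is right, the shifted operator $T=P-\tfrac12 I$ is self-adjoint with $\|T\|\le\tfrac12$ (its spectrum lies in $\{-\tfrac12,\tfrac12\}$), and the Cauchy--Schwarz bound $|\langle Tx,y\rangle|\le\tfrac12\|x\|\|y\|$ followed by the triangle inequality gives exactly the claimed estimate; this is in fact the standard modern proof of Buzano's inequality. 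Your alternative route via the orthogonal decomposition $x=\langle x,e\rangle e+x_\perp$, $y=\langle y,e\rangle e+y_\perp$ also closes correctly: the identity $\langle x,e\rangle\langle e,y\rangle=\langle x,y\rangle-\langle x_\perp,y_\perp\rangle$ together with $\|x\|\|y\|\ge|\langle x,e\rangle||\langle e,y\rangle|+\|x_\perp\|\|y_\perp\|$ yields $2|\langle x,e\rangle\langle e,y\rangle|\le|\langle x,y\rangle|+\|x\|\|y\|$ after the triangle inequality. Either version would serve as a legitimate proof of the cited lemma.
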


Fourth lemma is known as  mixed Schwarz inequality.

\begin{lemma}$($\cite{H}$)$\label{lem4}
	If $A\in \mathbb{B}(\mathscr{H})$, then
	\[|\langle Ax,y \rangle| \leq \langle |A|x,x \rangle ^{1/2} \langle |A^*|y,y \rangle ^{1/2},\]
	for all $x,y \in \mathscr{H}$.
\end{lemma}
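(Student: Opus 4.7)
The plan is to prove this inequality via the polar decomposition of $A$ coupled with the ordinary Cauchy--Schwarz inequality in $\mathscr{H}$. I write $A=U|A|$, where $U$ is the partial isometry with initial space $\overline{\mathrm{ran}(|A|)}$, so that $U^*U$ is the orthogonal projection onto $\overline{\mathrm{ran}(|A|)}$ and in particular $U^*U\,|A|=|A|$ as operators.

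The first step is to record the auxiliary identity $|A^*|=U|A|U^*$. One derives this by computing $AA^*=U|A|^2U^*$ and $(U|A|U^*)^2=U|A|(U^*U)|A|U^*=U|A|^2U^*$, and then invoking uniqueness of the positive square root for the self-adjoint positive operator $U|A|U^*$. With this in hand, the main estimate is nearly immediate: factor $|A|=|A|^{1/2}|A|^{1/2}$ and transfer one factor across the inner product, obtaining
\[
\langle Ax,y\rangle=\langle U|A|^{1/2}\,|A|^{1/2}x,\,y\rangle=\langle |A|^{1/2}x,\,|A|^{1/2}U^*y\rangle
\]
for arbitrary $x,y\in\mathscr{H}$. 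Applying Cauchy--Schwarz to the two vectors on the right gives $|\langle Ax,y\rangle|\leq\||A|^{1/2}x\|\,\||A|^{1/2}U^*y\|$, and the two norms simplify as $\||A|^{1/2}x\|^2=\langle |A|x,x\rangle$ and $\||A|^{1/2}U^*y\|^2=\langle U|A|U^*y,y\rangle=\langle |A^*|y,y\rangle$ by the identity above. Taking square roots delivers the lemma.

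The only delicate point, and where I expect the principal obstacle to lie, is the verification of $|A^*|=U|A|U^*$ in the partial-isometry setting: the step $(U|A|U^*)^2=U|A|^2U^*$ requires inserting $U^*U$ between two copies of $|A|$, which is justified by the inclusion $\mathrm{ran}(|A|)\subseteq(\ker U)^\perp$. Everything else is a single application of Cauchy--Schwarz. An alternative route worth noting would bypass the polar decomposition by directly establishing positivity of the $2\times 2$ block operator $\bigl(\begin{smallmatrix}|A| & A^*\\ A & |A^*|\end{smallmatrix}\bigr)$ on $\mathscr{H}\oplus\mathscr{H}$ and then invoking the general fact that $\bigl(\begin{smallmatrix}P & X^*\\ X & Q\end{smallmatrix}\bigr)\geq O$ implies $|\langle Xx,y\rangle|^2\leq\langle Px,x\rangle\langle Qy,y\rangle$ via a discriminant argument applied to $\langle M(\lambda x\oplus y),\lambda x\oplus y\rangle\geq 0$. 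Either route produces the claim.
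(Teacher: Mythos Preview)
Your argument is correct: the polar decomposition $A=U|A|$ together with $|A^*|=U|A|U^*$ and a single application of Cauchy--Schwarz is precisely the classical proof of the mixed Schwarz inequality, and your handling of the only subtle point, namely $U^*U|A|=|A|$, is accurate. The paper itself does not supply a proof of this lemma; it is quoted from Halmos \cite{H}, where essentially the same polar-decomposition argument appears, so there is nothing further to compare.
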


We are now in a position to prove our first desired inequality.

\begin{theorem}\label{th1}
	If $A,B\in \mathbb{B}(\mathscr{H})$, then 
	\begin{eqnarray*}
&&	w^{2r}\begin{pmatrix}
		O&A\\ B&O
	\end{pmatrix} \\
&& \leq  \frac{1}{4} \max \Big \{ \left \| |B|^{2r}+|A^*|^{2r}  \right \|, \left \| |A|^{2r}+|B^*|^{2r} \right \| \Big\}+\frac{(1-\alpha)}{2} \max \Big \{w^r(AB), w^r(BA) \Big \}\\
&&  + \frac{\alpha}{2} \max \Big \{\left \|\Re(|B|^r|A^*|^r) \right \|,  \left \|\Re(|A|^r|B^*|^r) \right \|\Big \},
	\end{eqnarray*}
for all $\alpha\in [0,1]$ and for all $r\geq 1$.
\end{theorem}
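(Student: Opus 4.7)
The plan is to reduce the claim to a pointwise inequality on unit vectors $\eta=\binom{x}{y}\in\mathscr H\oplus\mathscr H$ for the operator $T=\bigl(\begin{smallmatrix}O&A\\B&O\end{smallmatrix}\bigr)$, and then exploit the block-diagonal structure of $|T|^{2r}+|T^*|^{2r}$, $T^2$, and $\Re(|T|^r|T^*|^r)$. Concretely, I would establish
\[
|\langle T\eta,\eta\rangle|^{2r}\leq \tfrac{1}{4}\langle(|T|^{2r}+|T^*|^{2r})\eta,\eta\rangle+\tfrac{1-\alpha}{2}|\langle T^2\eta,\eta\rangle|^r+\tfrac{\alpha}{2}\langle\Re(|T|^r|T^*|^r)\eta,\eta\rangle,
\]
take $\sup_{\|\eta\|=1}$, and observe that all three operator quantities on the right are block-diagonal, so Lemma~\ref{lem1} combined with $\|P\|=w(P)$ for positive $P$ and $\|S\|=w(S)$ for self-adjoint $S$ converts the three suprema into exactly the $\max$ expressions appearing in the statement.

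For the pointwise bound I would interpolate two endpoint estimates. For $\alpha=0$, apply Lemma~\ref{lem3} (Buzano) with $e=\eta$ and vectors $T\eta,T^*\eta$, using $\langle T\eta,T^*\eta\rangle=\langle T^2\eta,\eta\rangle$, to get $|\langle T\eta,\eta\rangle|^2\leq\tfrac{1}{2}(\|T\eta\|\|T^*\eta\|+|\langle T^2\eta,\eta\rangle|)$; then raise to the $r$-th power, use the convexity bound $(a+b)^r\leq 2^{r-1}(a^r+b^r)$, the AM--GM estimate $\|T\eta\|^r\|T^*\eta\|^r\leq\tfrac{1}{2}(\|T\eta\|^{2r}+\|T^*\eta\|^{2r})$, and Lemma~\ref{lem2} (McCarthy) in the form $\langle|T|^2\eta,\eta\rangle^r\leq\langle|T|^{2r}\eta,\eta\rangle$. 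For $\alpha=1$, use the mixed Schwarz inequality (Lemma~\ref{lem4}) together with Lemma~\ref{lem2} to reach $|\langle T\eta,\eta\rangle|^{2r}\leq\langle|T|^r\eta,\eta\rangle\langle|T^*|^r\eta,\eta\rangle$; since the product on the right is a nonnegative real, I then apply the refined Buzano described below with $p=|T|^r\eta$, $q=|T^*|^r\eta$, followed by AM--GM $\|p\|\|q\|\leq\tfrac{1}{2}\langle(|T|^{2r}+|T^*|^{2r})\eta,\eta\rangle$, using the identity $\Re\langle p,q\rangle=\langle\Re(|T|^r|T^*|^r)\eta,\eta\rangle$.

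The main obstacle is the refined Buzano inequality $\langle p,\eta\rangle\langle\eta,q\rangle\leq\tfrac{1}{2}(\|p\|\|q\|+\Re\langle p,q\rangle)$, which holds whenever $\langle p,\eta\rangle\langle\eta,q\rangle$ is a nonnegative real and is sharper than what Lemma~\ref{lem3} gives directly (the latter only produces $|\langle p,q\rangle|$ on the right). To prove it, decompose $p=s\eta+p'$, $q=t\eta+q'$ with $p',q'\perp\eta$ and $s,t\geq 0$ (hence $st=\langle p,\eta\rangle\langle\eta,q\rangle$): the Cauchy--Schwarz inequality in $\mathbb R^2$ gives $\|p\|\|q\|=\sqrt{(s^2+\|p'\|^2)(t^2+\|q'\|^2)}\geq st+\|p'\|\|q'\|$, while the identity $\Re\langle p,q\rangle=st+\Re\langle p',q'\rangle$ together with $\Re\langle p',q'\rangle\geq-\|p'\|\|q'\|$ yields $\Re\langle p,q\rangle\geq st-\|p'\|\|q'\|$; adding these two bounds delivers $\|p\|\|q\|+\Re\langle p,q\rangle\geq 2st$, as desired. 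Once both endpoint estimates are in hand, pointwise convex interpolation in $\alpha$ followed by taking the supremum and invoking Lemma~\ref{lem1} completes the proof.
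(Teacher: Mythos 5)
Your proposal is correct and follows essentially the same route as the paper: reduce to a pointwise bound for $T=\left(\begin{smallmatrix}O&A\\B&O\end{smallmatrix}\right)$ via a convex split in $\alpha$, handle the $(1-\alpha)$-branch with Buzano, the power-mean inequality and McCarthy exactly as the paper does, and then exploit the block-diagonal structure of $|T|^{2r}+|T^*|^{2r}$, $T^2$ and $|T|^r|T^*|^r$ together with Lemma \ref{lem1}. The only deviation is in the $\alpha$-branch, where the paper bounds $\langle|T|^rx,x\rangle\langle|T^*|^rx,x\rangle$ by AM--GM and McCarthy and then expands $(|T|^r+|T^*|^r)^2$, whereas you use a (correctly proved) refined Buzano inequality followed by AM--GM on $\||T|^r\eta\|\,\||T^*|^r\eta\|$; both arguments land on the identical intermediate bound $\tfrac14\langle(|T|^{2r}+|T^*|^{2r})\eta,\eta\rangle+\tfrac12\langle\Re(|T|^r|T^*|^r)\eta,\eta\rangle$.
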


\begin{proof}
Let $T=\begin{pmatrix}
O&A\\ B&O
\end{pmatrix}$	and let $x\in \mathscr{H}\oplus \mathscr{H} $ with $\|x\|=1$. Then we have,
\begin{eqnarray*}
&& |\langle Tx,x\rangle|^{2r}\\
&=&\alpha |\langle Tx,x\rangle|^{2r}+(1-\alpha)|\langle Tx,x\rangle|^{2r}\\
&\leq& \alpha \left (\langle |T|x,x\rangle^{1/2} \,\langle |T^*|x,x\rangle^{1/2} \right)^{2r}+ (1-\alpha)|\langle Tx,x\rangle|^{2r}\,\,\,\Big(\textit{by Lemma \ref{lem4}}\Big)\\
&=& \alpha \left (\langle |T|x,x\rangle^{r/2} \,\langle |T^*|x,x\rangle^{r/2} \right)^{2}+ (1-\alpha)|\langle Tx,x\rangle|^{2r}\\
&\leq& \alpha \left (\langle |T|^rx,x\rangle^{1/2} \,\langle |T^*|^rx,x\rangle^{1/2} \right)^{2}+ (1-\alpha)|\langle Tx,x\rangle|^{2r} \,\,\,\Big(\textit{by Lemma \ref{lem2}}\Big)\\
&\leq & \alpha \left ( \frac{\langle |T|^r x,x\rangle+\langle |T^*|^r x,x\rangle}{2} \right)^2+(1-\alpha)|\langle Tx,x\rangle|^{2r} \\
&= & \alpha \left ( \frac{\langle ( |T|^r+|T^*|^r) x,x\rangle}{2} \right)^2+(1-\alpha)|\langle Tx,x\rangle|^{2r}  \\
&\leq & \frac{\alpha}{4} \langle ( |T|^r+|T^*|^r)^2 x,x\rangle  +(1-\alpha)|\langle Tx,x\rangle\, \langle x,T^*x\rangle|^r \,\,\,\Big(\textit{by Lemma \ref{lem2}}\Big)\\
&\leq & \frac{\alpha}{4} \langle ( |T|^r+|T^*|^r)^2 x,x\rangle  +\frac{(1-\alpha)}{2^r}\big(\|Tx\| \, \|T^*x\|+ |\langle Tx,T^*x\rangle|\big)^r \,\,\,\Big(\textit{by Lemma \ref{lem3}}\Big)\\
&\leq & \frac{\alpha}{4} \langle ( |T|^r+|T^*|^r)^2 x,x\rangle  +\frac{(1-\alpha)}{2}\big(\|Tx\|^r \, \|T^*x\|^r+ |\langle Tx,T^*x\rangle|^r\big)\\
&\leq & \frac{\alpha}{4} \langle ( |T|^r+|T^*|^r)^2 x,x\rangle  +\frac{(1-\alpha)}{2}\left(\frac{\|Tx\|^{2r}+ \|T^*x\|^{2r}}{2}+ |\langle T^2x,x\rangle|^r\right)\\
&\leq & \frac{\alpha}{4} \langle ( |T|^r+|T^*|^r)^2 x,x\rangle  +\frac{(1-\alpha)}{2}\left(\frac{\langle (|T|^{2r}+ |T^*|^{2r})x,x \rangle}{2}+ |\langle T^2x,x\rangle|^r\right)\\
&= & \frac{\alpha}{4} \langle \left( |T|^{2r}+|T^*|^{2r}+|T|^r\,|T^*|^r+|T^*|^r\,|T|^r \right) x,x\rangle  \\
&& \,\,\,\,\,\,\,\,\,\,\,\,\,\,\,\,\,\,\,\,\,\,\,\,\,\,\,\,\,\,\,\,\,\,\,\,\,\,\,\,\,\,\,\,\,\,\,\,\,\,\,\,\,\,\,\,\,\,\,\,\,\,\,\,\,\,\,\,\,\,\,\,\,\,\,\,\,\,\,\,+\frac{(1-\alpha)}{2}\left(\frac{\langle (|T|^{2r}+ |T^*|^{2r})x,x \rangle}{2}+ |\langle T^2x,x\rangle|^r\right)\\
&= & \frac{1}{4} \langle \left( |T|^{2r}+|T^*|^{2r} \right) x,x\rangle +\frac{\alpha}{2} \langle \Re(|T|^r\,|T^*|^r)x,x\rangle +\frac{(1-\alpha)}{2}|\langle T^2x,x\rangle|^r\\
&\leq & \frac{1}{4} \langle \left( |T|^{2r}+|T^*|^{2r} \right) x,x\rangle +\frac{\alpha}{2} |\langle \Re(|T|^r\,|T^*|^r)x,x\rangle| +\frac{(1-\alpha)}{2}|\langle T^2x,x\rangle|^r\\
&\leq & \frac{1}{4} \left \|  |T|^{2r}+|T^*|^{2r} \right \| +\frac{\alpha}{2} \left \| \Re(|T|^r\,|T^*|^r) \right \|+\frac{(1-\alpha)}{2} w^r( T^2 ).
\end{eqnarray*}
Thus, taking supremum over $\|x \|=1$, we have
\begin{eqnarray}
w^{2r}(T)
&\leq & \frac{1}{4} \left \|  |T|^{2r}+|T^*|^{2r} \right \| +\frac{\alpha}{2} \left \| \Re(|T|^r\,|T^*|^r) \right \|+\frac{(1-\alpha)}{2} w^r( T^2 ).
\end{eqnarray}
That is,
\begin{eqnarray*}
w^{2r}\begin{pmatrix}
	O&A\\ B&O
\end{pmatrix}
&\leq & \frac{1}{4} \left \|  \begin{pmatrix}
	|B|^{2r}+|A^*|^{2r} &O\\ O&|A|^{2r}+|B^*|^{2r}
\end{pmatrix} \right \| \\
&& \,+\frac{\alpha}{2} \left \|   \begin{pmatrix}
\Re(|B|^r\,|A^*|^r)&O\\ O&\Re(|A|^r\,|B^*|^r)
\end{pmatrix} \right \|+\frac{(1-\alpha)}{2} w^r\begin{pmatrix}
AB&O\\ O&BA
\end{pmatrix}.
\end{eqnarray*}
Therefore, the required inequality follows from Lemma \ref{lem1}.

\end{proof}

\begin{remark}

In particular, considering $\alpha=1$ and $r=1$ in the above theorem we get that
\begin{eqnarray*}
	&& w^2\begin{pmatrix}
		O&A\\ B&O
	\end{pmatrix} \leq \\
& & \frac{1}{4} \max \Big \{ \left \| |B|^2+|A^*|^2  \right \|, \left \| |A|^2+|B^*|^2  \right \| \Big\}  + \frac{1}{2} \max \Big \{\left \|\Re(|B||A^*|) \right \|,  \left \|\Re(|A||B^*|) \right \|\Big \},
\end{eqnarray*}
which refines the existing one \cite[Th. 2.10]{PK2}, namely
\begin{eqnarray*}
&&	w^2\begin{pmatrix}
		O&A\\ B&O
	\end{pmatrix} \leq \\
& & \frac{1}{4} \max \Big \{ \left \| |B|^2+|A^*|^2  \right \|, \left \| |A|^2+|B^*|^2  \right \| \Big\} + \frac{1}{2} \max \Big \{w(|B||A^*|),  w(|A||B^*|) \Big \}.
\end{eqnarray*}

\end{remark}

To obtain our next result we need the following lemma, which can be found in  \cite[Lemma 2.1]{HKS}.

\begin{lemma}\label{lem5}
	If $A,B\in \mathbb{B}(\mathscr{H})$, then
	$$w \left(\begin{array}{cc}
	A & B\\
	B& A
	\end{array}\right)=\max \Big \{w(A+B),w(A-B) \Big\}.$$
In particular, \,\,\,\,\,\,\, \,\,\,\,\,\,$w \left( \begin{array}{cc}
	O & B\\
	B & O
	\end{array} \right)=w(B).$
\end{lemma}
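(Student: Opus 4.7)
The plan is to diagonalize the block matrix by a unitary conjugation and then invoke the unitary invariance of the numerical radius together with Lemma \ref{lem1}. The natural candidate unitary is
\[
U=\frac{1}{\sqrt 2}\begin{pmatrix} I & I \\ I & -I \end{pmatrix}\in\mathbb{B}(\mathscr H\oplus\mathscr H),
\]
which is self-adjoint and satisfies $U^*U=I$, as one checks by a direct $2\times 2$ block multiplication.

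First I would compute $U^*TU$ for $T=\begin{pmatrix} A & B \\ B & A \end{pmatrix}$. Multiplying out the blocks yields
\[
U^*\begin{pmatrix} A & B \\ B & A \end{pmatrix}U = \begin{pmatrix} A+B & O \\ O & A-B \end{pmatrix},
\]
the key cancellations being that the off-diagonal entries produce $(A-B)-(A-B)$ and $(A+B)-(A+B)$, each equal to $O$, while the diagonal entries collapse to $A+B$ and $A-B$ after dividing by $2$.

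Next, since the numerical radius is invariant under unitary similarity, I would conclude that
\[
w\begin{pmatrix} A & B \\ B & A \end{pmatrix} = w\begin{pmatrix} A+B & O \\ O & A-B \end{pmatrix},
\]
and then Lemma \ref{lem1} immediately gives the right-hand side $\max\{w(A+B),w(A-B)\}$. For the special case $A=O$, I would just substitute and use the elementary identity $w(-B)=w(B)$ to obtain $w\begin{pmatrix} O & B \\ B & O \end{pmatrix}=w(B)$.

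The only step that requires any care is verifying the block identity $U^*TU$; this is a routine but slightly tedious $2\times 2$ block computation, and the rest of the argument is a one-line appeal to unitary invariance and to Lemma \ref{lem1}. There is no real obstacle here, which matches the fact that the lemma is stated as a known result from \cite{HKS} rather than proved in the paper.
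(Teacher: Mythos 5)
Your proof is correct. The paper itself does not prove this lemma but simply cites it as \cite[Lemma 2.1]{HKS}, and your argument --- conjugating by the unitary $U=\frac{1}{\sqrt 2}\begin{pmatrix} I & I \\ I & -I\end{pmatrix}$ to block-diagonalize $\begin{pmatrix} A & B \\ B & A\end{pmatrix}$ into $\begin{pmatrix} A+B & O \\ O & A-B\end{pmatrix}$, then invoking unitary invariance of $w(\cdot)$ and Lemma \ref{lem1} --- is precisely the standard proof of that cited result, with the special case following from $w(-B)=w(B)$.
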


\begin{cor}\label{cor1}
If $A\in \mathbb{B}(\mathscr{H})$, then
\begin{eqnarray*}
w^{2r}(A) \leq \frac{1}{4} \left \| |A|^{2r}+|A^*|^{2r}  \right \|+ \frac{1}{2} \min \Big \{ \left \|\Re(|A|^r|A^*|^r) \right \|, \,\,  w^r(A^2) \Big \},
\end{eqnarray*}
for all $r\geq 1$.	
\end{cor}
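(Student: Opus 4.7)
The plan is to derive the corollary as a direct specialization of Theorem \ref{th1}, combined with Lemma \ref{lem5}. First I would set $B=A$ in Theorem \ref{th1}. Then the $2\times 2$ off-diagonal matrix becomes $\begin{pmatrix} O & A\\ A & O\end{pmatrix}$, whose numerical radius equals $w(A)$ by the second part of Lemma \ref{lem5}. On the right-hand side, every pair of ``max'' quantities collapses to a single term because the two entries become identical: both $\||B|^{2r}+|A^*|^{2r}\|$ and $\||A|^{2r}+|B^*|^{2r}\|$ reduce to $\||A|^{2r}+|A^*|^{2r}\|$; both $w^r(AB)$ and $w^r(BA)$ become $w^r(A^2)$; and both $\|\Re(|B|^r|A^*|^r)\|$ and $\|\Re(|A|^r|B^*|^r)\|$ become $\|\Re(|A|^r|A^*|^r)\|$.

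Consequently, Theorem \ref{th1} specializes to the one-parameter inequality
\begin{equation*}
w^{2r}(A)\leq \frac{1}{4}\bigl\||A|^{2r}+|A^*|^{2r}\bigr\|+\frac{\alpha}{2}\bigl\|\Re(|A|^r|A^*|^r)\bigr\|+\frac{1-\alpha}{2}\,w^r(A^2),
\end{equation*}
valid for every $\alpha\in[0,1]$ and every $r\geq 1$. Since the right-hand side is affine (in fact linear) in $\alpha$ on the interval $[0,1]$, its infimum is attained at one of the endpoints. Evaluating at $\alpha=1$ yields the bound with $\|\Re(|A|^r|A^*|^r)\|$, while evaluating at $\alpha=0$ yields the bound with $w^r(A^2)$; taking the smaller of the two produces exactly the minimum appearing in the corollary.

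There is no serious obstacle here: the argument is just a clean substitution plus the observation that a free convex parameter $\alpha\in[0,1]$ in front of a linear combination of two quantities lets one pick whichever of the two is smaller. Once the specialization is carried out and the endpoints $\alpha=0,1$ are examined, the corollary follows immediately.
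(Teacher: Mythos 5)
Your proposal is correct and matches the paper's own proof exactly: the authors likewise set $B=A$ in Theorem \ref{th1}, invoke Lemma \ref{lem5} to identify $w\begin{pmatrix} O&A\\ A&O\end{pmatrix}$ with $w(A)$, and then obtain the minimum by choosing the endpoints $\alpha=0$ and $\alpha=1$ in the resulting one-parameter bound. Nothing further is needed.
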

\begin{proof}
	Considering $A=B$ in Theorem \ref{th1}, and then using Lemma \ref{lem5} we infer that
	\begin{eqnarray}\label{bound1}
		w^{2r}(A) \leq \frac{1}{4} \left \| |A|^{2r}+|A^*|^{2r} \right \|+ \frac{1}{2} \Big \{ \alpha \left \|\Re(|A|^r|A^*|^r) \right \| +(1-\alpha)  w^r(A^2) \Big \},
	\end{eqnarray}
for all $\alpha \in [0,1]$ and for all $r\geq 1$. This implies the desired inequality. 
\end{proof}

\begin{remark}
	(i) Since $\left \|\Re(|A||A^*|) \right \|\leq w(|A||A^*|) $, so we would like to remark that Corollary \ref{cor1} (for $r=1$) gives stronger inequality than that in both (\ref{k15}) and (\ref{pk21}). \\
	(ii) If for norm one sequences $\{x_n\}$ in $\mathscr {H}$ with $|\langle \Re(|T|~|T^*|)  x_n,x_n \rangle| \to \|\Re(|T|~|T^*|)\|$ and $|\langle \Im(|T|~|T^*|)  x_n,x_n \rangle| \to \lambda ( \neq 0 )$, then Corollary \ref{cor1} (for $r=1$) gives strictly stronger inequality than that in (\ref{pk21}). 
\end{remark}

We next prove the following theorem.

\begin{theorem}
	Let $A\in \mathbb{B}(\mathscr{H})$  be such that $\Re\big(|A||A^*| \big)=O.$ Then $\overline{W(A)}$ is a circular disk with center at the origin and radius $\frac{1}{2}\sqrt{\|A^*A+AA^*\|}$.  
\end{theorem}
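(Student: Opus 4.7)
The plan is to first reduce the hypothesis $\Re(|A||A^*|)=O$ to the much stronger structural statement $A^2=O$, and then combine Corollary \ref{cor1} with (\ref{k5}) to pin down $w(A)$ while using a unitary similarity on the $2\times 2$ block form of $A$ to pin down the shape of $\overline{W(A)}$.

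For the reduction, I would rewrite the hypothesis as $|A||A^*| = -|A^*||A|$ and compute
\[
|A|\,|A^*|^2 \;=\; (|A||A^*|)\,|A^*| \;=\; -|A^*|(|A||A^*|) \;=\; |A^*|^2\,|A|,
\]
so $|A|$ commutes with $|A^*|^2 = AA^*$. By continuous functional calculus $|A|$ then also commutes with the positive square root $|A^*| = (AA^*)^{1/2}$; together with the anti-commutation hypothesis this forces $|A|\,|A^*| = O$. Using $\ker|A|=\ker A$ together with the standard identity $\overline{\mathrm{range}\,|A^*|} = \overline{\mathrm{range}\,(AA^*)} = \overline{\mathrm{range}\,A}$, one concludes $\overline{\mathrm{range}\,A}\subseteq \ker A$, i.e.\ $A^{2}=O$.

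Now Corollary \ref{cor1} with $r=1$ and $\|\Re(|A||A^*|)\|=0$ gives $w^{2}(A) \leq \tfrac14\|A^*A+AA^*\|$, while (\ref{k5}) supplies the matching lower bound, so $w(A)=\tfrac12\sqrt{\|A^*A+AA^*\|}$. To upgrade this to the statement that $\overline{W(A)}$ is an entire disk rather than just a set contained in one, I would exploit $A^{2}=O$ to write $A$ in the block form $A=\begin{pmatrix} O & B \\ O & O \end{pmatrix}$ relative to the decomposition $\mathscr{H}=\ker A\oplus(\ker A)^{\perp}$. For each $\theta\in\mathbb{R}$, the diagonal unitary $U_{\theta}=\begin{pmatrix} e^{-i\theta/2}I & O \\ O & e^{i\theta/2}I \end{pmatrix}$ satisfies $U_{\theta}^{*}AU_{\theta}=e^{i\theta}A$, hence $e^{i\theta}W(A)=W(A)$ for every $\theta$. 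The closed convex bounded set $\overline{W(A)}$ is therefore invariant under all rotations about the origin, and any such set is a closed disk centered there; its radius is the previously computed $w(A)$.

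The main technical obstacle is the continuous functional calculus step that converts commutativity of $|A|$ with $|A^*|^{2}$ into commutativity with $|A^*|$ itself; once this is available, the anti-commutation hypothesis collapses $|A||A^*|$ to $O$ and everything else follows from the already-established Corollary \ref{cor1}, the Kittaneh lower bound (\ref{k5}), standard range/kernel identifications for positive operators, and the elementary rotation-unitary trick enabled by the block form.
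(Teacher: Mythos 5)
Your proposal is correct, and it takes a genuinely different route from the paper. The paper's proof is short: it combines Corollary \ref{cor1} (with $r=1$) and the lower bound in (\ref{k5}) to get $w(A)=\frac{1}{2}\sqrt{\|A^*A+AA^*\|}$, and then invokes the external equality characterization \cite[Th. 2.14]{BPaul} as a black box to conclude that $\overline{W(A)}$ is a disk. You instead extract the structural fact hidden in the hypothesis: writing $\Re(|A||A^*|)=O$ as $|A||A^*|=-|A^*||A|$, your computation $|A||A^*|^2=|A^*|^2|A|$ plus functional calculus forces $|A||A^*|=|A^*||A|$, hence $|A||A^*|=O$, and the kernel/range identifications $\ker|A|=\ker A$ and $\overline{\mathrm{range}}\,|A^*|=\overline{\mathrm{range}}\,A$ give $A^2=O$; the rotation unitaries on the block form $\begin{pmatrix} O & B\\ O & O\end{pmatrix}$ then yield $e^{{\rm i}\theta}W(A)=W(A)$, and a compact convex rotation-invariant set is the closed disk of radius $w(A)$. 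All of these steps check out. Your approach buys self-containedness (no appeal to \cite{BPaul}) and the stronger observation that the hypothesis $\Re(|A||A^*|)=O$ is actually \emph{equivalent} to $A^2=O$, which demystifies the theorem (it becomes the classical fact that nilpotents of order two have circular numerical range, with radius $\frac{1}{2}\|A\|=\frac{1}{2}\sqrt{\|A^*A+AA^*\|}$); indeed, once $A^2=O$ is in hand you could even bypass Corollary \ref{cor1} and (\ref{k5}) entirely. The paper's route is shorter on the page but leans on the cited equality theorem.
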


\begin{proof}
	From Corollary \ref{cor1}, we get for the case $r=1,$
	\begin{eqnarray}\label{main}
	w^{2}(A) \leq \frac{1}{4} \left \| |A|^{2}+|A^*|^{2} \right \|+ \frac{1}{2}   \left \|\Re(|A||A^*|) \right \|. 
	\end{eqnarray}
The first inequality in (\ref{k5}) together with (\ref{main}) we infer that
%\[ 	w^{2}(A) \geq \frac{1}{4} \left \| |A|^{2}+|A^*|^{2} \right \|.\]
 $  w(A) = \frac{1}{2} \sqrt{\left \| |A|^{2}+|A^*|^{2}  \right \|} = \frac{1}{2} \sqrt{\|A^*A+AA^*\|} $ 	if  $\Re\big(|A||A^*| \big)=O.$  
	Therefore, from \cite[Th. 2.14]{BPaul} it follows that  $\overline{W(A)}$ is a circular disk with center at the origin and radius $\frac{1}{2}\sqrt{\|A^*A+AA^*\|}$.  
	
\end{proof}

%\begin{remark}

We observe that the converse of the above result is, in general, not true. There exists  an operator $A\in \mathbb{B}(\mathscr{H})$ for which  $w(A) = \frac{1}{2}\sqrt{  \left \| |A|^2+|A^*|^2  \right \|},$ but $\Re\big(|A||A^*| \big)\neq O$.
	Consider  $A=\begin{pmatrix}
		0&1&0\\
		0&0&1\\
		0&0&0
	\end{pmatrix}$. Then one can verify  that $w(A) = \frac{1}{2}\sqrt{  \left \| |A|^2+|A^*|^2  \right \|}  =\frac{1}{\sqrt{2}} $, but  $\Re\big(|A||A^*| \big)=\begin{pmatrix}
		0&0&0\\
		0&1&0\\
		0&0&0
	\end{pmatrix}$.

%\end{remark}

Next we obtain an inequality for the numerical radius of the sum of $n$ operators which generalizes Theorem \ref{th1}. For this, we need the following 	Bohr's inequality which deals with positive real numbers.

\begin{lemma}$($\cite{v}$)$\label{lem4v}
	If $a_i\geq 0$ for each $i=1,2,\ldots,n$, then 
	\[\left( \sum_{i=1}^na_i\right)^r \leq n^{r-1}\sum_{i=1}^na_i^r,\] for all $r\geq 1$.
\end{lemma}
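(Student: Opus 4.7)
The plan is to deduce this from the convexity of the map $\phi(t)=t^{r}$ on $[0,\infty)$, which holds precisely because $r\geq 1$. First I would dispose of the trivial case $r=1$, where both sides equal $\sum_{i=1}^n a_i$ (with equality), so I may assume $r>1$. I would also note that if every $a_i=0$ the inequality is trivially $0\leq 0$, so I may assume the sum is strictly positive.

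The cleanest route is Jensen's inequality applied to $\phi$ with the uniform probability weights $\tfrac{1}{n}$, which yields
\[
\phi\!\left(\frac{1}{n}\sum_{i=1}^{n}a_i\right)\;\leq\;\frac{1}{n}\sum_{i=1}^{n}\phi(a_i),
\]
that is, $\bigl(\tfrac{1}{n}\sum_{i=1}^n a_i\bigr)^r \leq \tfrac{1}{n}\sum_{i=1}^n a_i^r$. Multiplying through by $n^{r}$ delivers the stated bound $\bigl(\sum_{i=1}^{n}a_i\bigr)^{r}\leq n^{r-1}\sum_{i=1}^{n}a_i^{r}$.

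Alternatively, I would reach the same conclusion via H\"older's inequality applied to the vectors $(1,\dots,1)$ and $(a_1,\dots,a_n)$ with conjugate exponents $p=r/(r-1)$ and $q=r$:
\[
\sum_{i=1}^{n}a_i \;=\; \sum_{i=1}^{n} 1\cdot a_i \;\leq\; \left(\sum_{i=1}^{n}1^{p}\right)^{1/p}\left(\sum_{i=1}^{n}a_i^{r}\right)^{1/r} \;=\; n^{(r-1)/r}\left(\sum_{i=1}^{n}a_i^{r}\right)^{1/r},
\]
and raising both sides to the $r$-th power gives the result. A third possibility, slightly longer, is induction on $n$ combined with the two-variable case $(a+b)^{r}\leq 2^{r-1}(a^{r}+b^{r})$, which itself follows from the convexity of $\phi$.

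There is no substantive obstacle: the entire content of the lemma is the convexity of $t\mapsto t^{r}$ for $r\geq 1$ (equivalently, the monotonicity of power means, or H\"older's inequality). The only mild care needed is to handle $r=1$ as a separate trivial case so that the conjugate exponent $p=r/(r-1)$ in the H\"older approach is finite, and to remember that $0^{r}=0$ for $r\geq 1$ so vanishing summands cause no issue.
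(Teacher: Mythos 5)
Your proof is correct and complete: the lemma is exactly the statement that $t\mapsto t^{r}$ is convex on $[0,\infty)$ for $r\geq 1$, and either of your two routes (Jensen with uniform weights $\tfrac{1}{n}$, or H\"older with exponents $p=r/(r-1)$ and $q=r$) delivers the bound after multiplying by $n^{r}$ or raising to the $r$-th power, respectively. The paper itself gives no proof of this lemma --- it is quoted as a known result of Vasi\'c and Ke\^cki\'c with only a citation --- so there is no argument in the paper to compare against; your Jensen/convexity derivation is the standard one and would serve as a self-contained justification. Your care with the edge cases ($r=1$ so that the conjugate exponent is finite, and all $a_i=0$) is appropriate but not strictly necessary for the Jensen route, which handles both uniformly.
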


\begin{theorem}\label{th1gen}
	If $A_i \in \mathbb{B}(\mathscr{H})$ for $ \ i=1,2, \ldots,n$,  then 
	\begin{eqnarray*}
		w^{2r}\left( \sum_{i=1}^{n}A_i \right) &\leq&     \frac{n^{2r-1}}{4} \left \| \sum_{i=1}^{n}  \left( |A_i|^{2r}+ |A_i^*|^{2r} \right)      \right\| \\        && +\frac{n^{2r-1}}{2} \left(\alpha \left \| \sum_{i=1}^{n}\Re \left (|A_i|^r|A_i^*|^r \right)\right \|+(1-\alpha) \sum_{i=1}^{n}w^r \left( A_i^{2} \right)      \right),  
	\end{eqnarray*}
	for all $\alpha\in [0,1]$ and for all $r\geq 1.$
\end{theorem}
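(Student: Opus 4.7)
The plan is to reduce the sum-of-$n$-operators statement to the single-operator inequality established inside the proof of Theorem~\ref{th1}. The two tools needed beyond that proof are Bohr's inequality (Lemma~\ref{lem4v}) to pass from $|\langle \sum_i A_i x,x\rangle|^{2r}$ to $\sum_i |\langle A_i x,x\rangle|^{2r}$, together with the observation that the pointwise estimate derived in the proof of Theorem~\ref{th1} in fact holds term by term \emph{before} any absolute value or supremum is taken, so the sums over $i$ can be pushed inside the inner products.

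First, I fix a unit vector $x\in\mathscr{H}$ and apply the triangle inequality together with Lemma~\ref{lem4v}:
\begin{equation*}
\Bigl|\Bigl\langle \sum_{i=1}^n A_i x,x\Bigr\rangle\Bigr|^{2r}\le \Bigl(\sum_{i=1}^n |\langle A_i x,x\rangle|\Bigr)^{2r}\le n^{2r-1}\sum_{i=1}^n |\langle A_i x,x\rangle|^{2r}.
\end{equation*}
Next, I apply to each $A_i$ the same chain of inequalities (mixed Schwarz, McCarthy, AM--GM, Buzano, McCarthy) that is used to obtain the key bound in the proof of Theorem~\ref{th1}. That chain is purely pointwise in $x$ and yields, for every $i$,
\begin{equation*}
|\langle A_i x,x\rangle|^{2r}\le \tfrac{1}{4}\langle(|A_i|^{2r}+|A_i^*|^{2r})x,x\rangle+\tfrac{\alpha}{2}\langle \Re(|A_i|^r|A_i^*|^r)x,x\rangle+\tfrac{1-\alpha}{2}|\langle A_i^2 x,x\rangle|^r.
\end{equation*}

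Summing over $i$ and using linearity of the inner product in the first two terms, I obtain
\begin{equation*}
\sum_{i=1}^n |\langle A_i x,x\rangle|^{2r}\le \tfrac{1}{4}\Bigl\langle \sum_{i=1}^n (|A_i|^{2r}+|A_i^*|^{2r})x,x\Bigr\rangle+\tfrac{\alpha}{2}\Bigl\langle \sum_{i=1}^n \Re(|A_i|^r|A_i^*|^r)x,x\Bigr\rangle+\tfrac{1-\alpha}{2}\sum_{i=1}^n |\langle A_i^2 x,x\rangle|^r.
\end{equation*}
Combining with the Bohr step, taking the supremum over unit vectors, and using that $\sup_{\|x\|=1}|\langle Sx,x\rangle|=\|S\|$ for any self-adjoint $S$ (applied both to the positive operator $\sum_i(|A_i|^{2r}+|A_i^*|^{2r})$ and to the self-adjoint $\sum_i \Re(|A_i|^r|A_i^*|^r)$), together with the subadditivity $\sup_x \sum_i f_i(x)\le\sum_i \sup_x f_i(x)$ applied to $f_i(x)=|\langle A_i^2 x,x\rangle|^r$, gives precisely the claimed bound.

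There is no real obstacle here; the only subtlety is book-keeping. One must resist the temptation to take $|\cdot|$ around $\langle\Re(|A_i|^r|A_i^*|^r)x,x\rangle$ for each individual $i$, since doing so would produce $\sum_i\|\Re(|A_i|^r|A_i^*|^r)\|$ rather than the sharper $\|\sum_i \Re(|A_i|^r|A_i^*|^r)\|$ that appears in the statement; the absolute value must be deferred until after the sum has been pulled inside a single inner product. Also, one should check that the pointwise inequality coming from the proof of Theorem~\ref{th1} is valid with the \emph{signed} real part (not its modulus), which is the case because every step in that chain is an inequality between real quantities that remains intact before the final absolute value is introduced. Setting $n=1$ recovers Theorem~\ref{th1} (specialized to a single operator, i.e., the estimate behind Corollary~\ref{cor1}).
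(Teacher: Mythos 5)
Your proposal is correct and follows essentially the same route as the paper: the paper's own proof applies the triangle inequality and Bohr's inequality (Lemma~\ref{lem4v}) to reduce to $\sum_{i}|\langle A_i x,x\rangle|^{2r}$ and then simply says ``proceeding similarly as in Theorem~\ref{th1}''. You have merely made explicit the term-by-term pointwise estimate and the book-keeping (deferring the absolute value on the real-part term until after summation) that the paper leaves implicit, and those details check out.
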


\begin{proof}
	Let $x\in \mathscr{H}$ with $\|x\|=1.$ Then  by using Lemma \ref{lem4v} we infer that 
	\begin{eqnarray*}
		\left |\left \langle \left( \sum_{i=1}^{n}A_i \right)x,x \right \rangle \right |^{2r} &=& \left |\sum_{i=1}^{n} \left \langle  A_i x,x \right \rangle \right |^{2r}\\
		&\leq&  \left (\sum_{i=1}^{n} |\left \langle  A_i x,x \right \rangle| \right )^{2r}\\
		&\leq&  n^{2r-1} \left (\sum_{i=1}^{n} |\left \langle  A_i x,x \right \rangle|^{2r} \right )\\
		&=&  n^{2r-1} \left (   \alpha \sum_{i=1}^{n} |\left \langle  A_i x,x \right \rangle|^{2r} +(1-\alpha) \sum_{i=1}^{n} |\left \langle  A_i x,x \right \rangle|^{2r} \right ).
	\end{eqnarray*}
	Now proceeding similarly  as in Theorem \ref{th1} we have the desired inequality.
\end{proof}

Next we obtain a lower bound for the numerical radius of the $2 \times 2$ off-diagonal operator matrix $\begin{pmatrix} 
O&A\\ B&O
\end{pmatrix}$. By considering the unitary operator matrix $\begin{pmatrix} 
O&I\\ I&O
\end{pmatrix}$, the weak unitary invariance property for the numerical radius gives that $w\begin{pmatrix} 
O&A\\ B&O
\end{pmatrix}=w\begin{pmatrix} 
	O&B\\ A&O
\end{pmatrix}$.

\begin{theorem}\label{th2}
 If $A,B\in \mathbb{B}(\mathscr {H})$, then
 \begin{eqnarray}\label{lower1}
  w\begin{pmatrix} 
  	O&A\\ B&O
  \end{pmatrix}\geq \frac{1}{2} \Big \| \Re(A)+{\rm i}\, \Im(B) \Big \|+\frac{1}{4} \Big | \, \|A+B^*\|-\|A-B^*\|\,  \Big |,
\end{eqnarray}
and 
 \begin{eqnarray}\label{lower2}
w\begin{pmatrix} 
O&A\\ B&O
\end{pmatrix}\geq \frac{1}{2} \Big \| \Re(B)+{\rm i}\, \Im(A) \Big \|+\frac{1}{4} \Big | \, \|A+B^*\|-\|A-B^*\|\,  \Big |.
\end{eqnarray}
 \end{theorem}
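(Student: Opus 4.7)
My plan is to reduce both inequalities to elementary facts about the real and imaginary parts of the matrix $T=\begin{pmatrix} O&A\\ B&O\end{pmatrix}$ and to exploit a clean algebraic identity. First I would compute $\Re(T)$ and $\Im(T)$ directly: since $T^*=\begin{pmatrix} O&B^*\\ A^*&O\end{pmatrix}$, one gets $\Re(T)=\tfrac{1}{2}\begin{pmatrix} O&A+B^*\\ A^*+B&O\end{pmatrix}$ and $\Im(T)=\tfrac{1}{2}\begin{pmatrix} O&-i(A-B^*)\\ -i(B-A^*)&O\end{pmatrix}$. Both are self-adjoint off-diagonal matrices, and an off-diagonal self-adjoint block matrix $\begin{pmatrix}O&X\\X^*&O\end{pmatrix}$ has operator norm $\|X\|$. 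This yields
\[
\|\Re(T)\|=\tfrac{1}{2}\|A+B^*\|, \qquad \|\Im(T)\|=\tfrac{1}{2}\|A-B^*\|.
\]

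Next I would invoke the standard bound $w(C)\geq\max\{\|\Re(C)\|,\|\Im(C)\|\}$ (valid since $\Re(C)$ and $\Im(C)$ are self-adjoint and $\|\cdot\|=w(\cdot)$ on self-adjoint operators, together with $|\Re\langle Cx,x\rangle|\leq|\langle Cx,x\rangle|$). Using $\max\{a,b\}=\tfrac{1}{2}(a+b+|a-b|)$, this gives
\[
2w(T)\;\geq\;\|\Re(T)\|+\|\Im(T)\|+\bigl|\,\|\Re(T)\|-\|\Im(T)\|\,\bigr|.
\]
The key algebraic step, and the only non-routine observation, is the identity
\[
2\bigl(\Re(A)+i\,\Im(B)\bigr)=A+A^*+B-B^*=(A-B^*)+(A+B^*)^*.
\]
Applying the triangle inequality to this and using $\|(A+B^*)^*\|=\|A+B^*\|$ produces
\[
\|\Re(A)+i\,\Im(B)\|\;\leq\;\tfrac{1}{2}\bigl(\|A-B^*\|+\|A+B^*\|\bigr)=\|\Re(T)\|+\|\Im(T)\|.
\]

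Combining the last two displays gives
\[
2w(T)\;\geq\;\|\Re(A)+i\,\Im(B)\|+\bigl|\,\|\Re(T)\|-\|\Im(T)\|\,\bigr|,
\]
and substituting the explicit values of $\|\Re(T)\|,\|\Im(T)\|$ yields inequality \eqref{lower1}. Finally, inequality \eqref{lower2} follows by applying \eqref{lower1} to the matrix $\begin{pmatrix} O&B\\ A&O\end{pmatrix}$ and using the weak unitary invariance $w\begin{pmatrix} O&A\\ B&O\end{pmatrix}=w\begin{pmatrix} O&B\\ A&O\end{pmatrix}$ noted just before the theorem (together with the trivial identities $\|B+A^*\|=\|A+B^*\|$ and $\|B-A^*\|=\|A-B^*\|$). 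I expect the main conceptual step to be spotting the decomposition $2(\Re(A)+i\Im(B))=(A-B^*)+(A+B^*)^*$, since everything else is bookkeeping; the rest is a clean combination of $w(T)\geq\max\{\|\Re(T)\|,\|\Im(T)\|\}$ and the triangle inequality.
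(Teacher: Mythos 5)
Your proof is correct and follows essentially the same route as the paper: both start from $w(T)\geq\max\{\|\Re(T)\|,\|\Im(T)\|\}=\tfrac12\max\{\|A+B^*\|,\|A-B^*\|\}$, split the max as average plus half the absolute difference, and apply the triangle inequality to a sum equal to $2\Re(A)\pm 2i\,\Im(B)$ (your decomposition $(A-B^*)+(A+B^*)^*$ is just the adjoint of the paper's $(A+B^*)+(A^*-B)$, so the norms coincide). The derivation of \eqref{lower2} by swapping $A$ and $B$ also matches the paper.
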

\begin{proof}
It is well-known that 	$$w\begin{pmatrix} 
O&A\\ B&O
\end{pmatrix}\geq \left\| \Re\begin{pmatrix} 
O&A\\ B&O
\end{pmatrix} \right\|=\frac{1}{2}\| A+B^*\|$$ and $$w\begin{pmatrix} 
O&A\\ B&O
\end{pmatrix}\geq \left\| \Im\begin{pmatrix} 
O&A\\ B&O
\end{pmatrix} \right\|=\frac{1}{2}\| A-B^*\|.$$
Therefore, we have
\begin{eqnarray*}
w\begin{pmatrix} 
	O&A\\ B&O
\end{pmatrix} &\geq & \frac{1}{2} \max \Big \{ \|A+B^*\|, \|A-B^*\|\Big\}\\
&=& \frac{1}{2} \left ( \frac{\|A+B^*\|+ \|A-B^*\| }{2}+ \frac{|\,\|A+B^*\|-\|A-B^*\|\,| }{2} \right) \\
&=& \frac{1}{2} \left ( \frac{\|A+B^*\|+ \|A^*-B\| }{2}+ \frac{|\,\|A+B^*\|-\|A-B^*\|\,| }{2} \right) \\
&\geq & \frac{1}{2} \left ( \frac{\|(A+B^*)+(A^*-B)\| }{2}+ \frac{|\,\|A+B^*\|-\|A-B^*\|\,| }{2} \right) \\
&=& \frac{1}{2} \left ( \|\Re(A)-{\rm i}\,\Im(B) \|+ \frac{|\,\|A+B^*\|-\|A-B^*\|\,| }{2} \right) \\
&=& \frac{1}{2} \left \|\Re(A)-{\rm i}\,\Im(B) \right \|+ \frac{1 }{4}\Big|\,\|A+B^*\|-\|A-B^*\|\,\Big|.
\end{eqnarray*}
This implies the inequality (\ref{lower1}). Interchanging $A$ and $B$ in (\ref{lower1}) we get the inequality (\ref{lower2}). 
\end{proof}
As a consequence of Theorem \ref{th2} we get the following corollaries.
\begin{cor}\label{lowerbound1}
	Let $ A \in \mathbb{B}(\mathscr{H}).$  Then 
\begin{eqnarray}\label{lowerLAA}
	w(A)\geq \frac{1}{2}\big\|A \big\|+\frac{1}{2} \Big|\, \|\Re(A)\|-\|\Im(A)\|\,\Big|.
\end{eqnarray}
\end{cor}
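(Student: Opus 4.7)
The plan is to specialize Theorem \ref{th2} by taking $B=A$, which collapses the $2\times 2$ off-diagonal matrix into something whose numerical radius is exactly $w(A)$, while the right-hand side of (\ref{lower1}) becomes precisely the bound claimed in the corollary.

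More concretely, first I would invoke Lemma \ref{lem5} (second assertion) to identify
\[
w\begin{pmatrix} O & A \\ A & O \end{pmatrix}=w(A).
\]
Next, with $B=A$, the three quantities appearing on the right of (\ref{lower1}) simplify cleanly: $A+B^{*}=A+A^{*}=2\Re(A)$ and $A-B^{*}=A-A^{*}=2\mathrm{i}\,\Im(A)$, so
\[
\|A+B^{*}\|=2\|\Re(A)\|,\qquad \|A-B^{*}\|=2\|\Im(A)\|,
\]
while
\[
\Re(A)+\mathrm{i}\,\Im(B)=\Re(A)+\mathrm{i}\,\Im(A)=A,
\]
so $\|\Re(A)+\mathrm{i}\,\Im(B)\|=\|A\|$.

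Substituting these three identities into (\ref{lower1}) immediately yields
\[
w(A)\geq \frac{1}{2}\|A\|+\frac{1}{4}\Big|\,2\|\Re(A)\|-2\|\Im(A)\|\,\Big|=\frac{1}{2}\|A\|+\frac{1}{2}\Big|\,\|\Re(A)\|-\|\Im(A)\|\,\Big|,
\]
which is exactly (\ref{lowerLAA}). There is essentially no obstacle here; the whole content is the correct choice $B=A$, after which the norm identities are purely formal. The only minor point worth noting is that one could equally have applied (\ref{lower2}) in place of (\ref{lower1})---with $B=A$ the two bounds coincide---so the corollary is a genuinely symmetric consequence of Theorem \ref{th2}.
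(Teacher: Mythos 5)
Your proposal is correct and is exactly the paper's argument: the paper also derives the corollary by setting $B=A$ in Theorem \ref{th2} (leaving the routine identifications $w\begin{pmatrix} O&A\\ A&O\end{pmatrix}=w(A)$, $\Re(A)+{\rm i}\,\Im(A)=A$, $\|A+A^*\|=2\|\Re(A)\|$, $\|A-A^*\|=2\|\Im(A)\|$ implicit). You have simply written out the details the authors omitted.
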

\begin{proof} This follows clearly from Theorem \ref{th2}  by considering $A=B.$ 
\end{proof}

\begin{cor}\label{cor_applLAA}
	 Let $A,B\in \mathbb{B}(\mathscr {H}).$ Then 
	\begin{eqnarray}\label{applLAA}
		w\begin{pmatrix} 
			O&A\\ B&O
		\end{pmatrix}\geq \frac{1}{2} \max \Big \{\|A\|, \|B\|  \Big \}+\frac{1}{4} \Big | \, \|A+B^*\|-\|A-B^*\|\,  \Big |.
	\end{eqnarray}
\end{cor}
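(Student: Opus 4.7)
Writing $T=\begin{pmatrix} O & A \\ B & O \end{pmatrix}$, my plan is to rerun the opening step of the proof of Theorem \ref{th2} and then bound the arithmetic mean $\tfrac12(\|A+B^*\|+\|A-B^*\|)$ in a different way, so as to produce $\|A\|$ and $\|B\|$ instead of $\|\Re(A)\pm{\rm i}\,\Im(B)\|$.

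First I would recall the two standard numerical-radius inequalities $w(T)\ge\|\Re(T)\|=\tfrac12\|A+B^*\|$ and $w(T)\ge\|\Im(T)\|=\tfrac12\|A-B^*\|$, both already exploited at the start of the proof of Theorem \ref{th2}. Taking the maximum and using the elementary identity $\max\{a,b\}=\tfrac{a+b}{2}+\tfrac{|a-b|}{2}$ yields
\[
w(T)\;\ge\;\frac{\|A+B^*\|+\|A-B^*\|}{4}+\frac{\bigl|\,\|A+B^*\|-\|A-B^*\|\,\bigr|}{4}.
\]
The second summand is already the desired remainder, so it only remains to bound the first summand from below by $\tfrac12\max\{\|A\|,\|B\|\}$.

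This is achieved by a pair of triangle-inequality estimates: adding gives $\|A+B^*\|+\|A-B^*\|\ge\|(A+B^*)+(A-B^*)\|=2\|A\|$, while subtracting gives $\|A+B^*\|+\|A-B^*\|\ge\|(A+B^*)-(A-B^*)\|=2\|B^*\|=2\|B\|$. Hence $\tfrac14(\|A+B^*\|+\|A-B^*\|)\ge\tfrac12\max\{\|A\|,\|B\|\}$, and assembling the two pieces produces the inequality in Corollary \ref{cor_applLAA}. There is essentially no obstacle: the argument is a minor variant of the opening display in the proof of Theorem \ref{th2}, where the adjoint-swap trick $\|A-B^*\|=\|A^*-B\|$ is simply replaced by the symmetric pair of triangle inequalities above, which isolate $\|A\|$ and $\|B\|$ rather than the real and imaginary parts of $A$ and $B$.
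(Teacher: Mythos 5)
Your argument is correct, and it is in substance the same as the paper's: the paper simply applies Corollary \ref{lowerbound1} (i.e.\ $w(T)\ge \tfrac12\|T\|+\tfrac12\big|\|\Re(T)\|-\|\Im(T)\|\big|$) to the block operator $T$, and unwinding that corollary yields exactly your computation --- the identity $\max\{a,b\}=\tfrac{a+b}{2}+\tfrac{|a-b|}{2}$ applied to $\|\Re(T)\|=\tfrac12\|A+B^*\|$ and $\|\Im(T)\|=\tfrac12\|A-B^*\|$, followed by the triangle inequality to recover $\max\{\|A\|,\|B\|\}=\|T\|$. The only cosmetic difference is that you rederive the needed special case in place rather than citing the earlier corollary.
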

\begin{proof} Considering the operator $ \begin{pmatrix} 
		O&A\\ B&O
	\end{pmatrix} $ and applying the inequality (\ref{lowerLAA}) we get the desired inequality (\ref{applLAA}).
\end{proof}
\begin{remark}\label{rem1}
(i) Consider the matrix  $A=\begin{pmatrix} 
3&0\\ 0&0
\end{pmatrix}$ and $B=\begin{pmatrix} 
2+3{\rm i}&0\\ 0&0
\end{pmatrix}$. Then, $\max \Big \{\|A\|, \|B\|  \Big \}=\sqrt{13}$ and $\left \|\Re(A)+{\rm i}\,\Im(B) \right \|=\sqrt{18}$. Again, if we assume that  $A=\begin{pmatrix} 
3&0\\ 0&0
\end{pmatrix}$ and $B=\begin{pmatrix} 
0&0\\ 0&2+3{\rm i}
\end{pmatrix}$, then $\max \Big \{\|A\|, \|B\|  \Big \}=\sqrt{13}$ and $\left \|\Re(A)+{\rm i}\,\Im(B) \right \|=\sqrt{9}$. Thus, we would like to remark that the inequalities obtained in Theorem \ref{th2} and Corollary \ref{cor_applLAA} are, in general, not comparable.\\
(ii)  We observe that the inequalities in Theorem \ref{th2} and Corollary \ref{cor_applLAA} are sharp.
\end{remark}

Next, we study the following necessary conditions for the equalities of $w\begin{pmatrix} 
O&A\\ B&O
\end{pmatrix} = \frac{1}{2} \left \|\Re(A)+{\rm i}\,\Im(B) \right \|$ and $w\begin{pmatrix} 
O&A\\ B&O
\end{pmatrix} = \frac{1}{2} \left \|\Re(B)+{\rm i}\,\Im(A) \right \|$.

\begin{proposition}\label{prop1}
	Let $A,B\in \mathbb{B}(\mathscr{H})$. Then the following results hold.\\
(i)	If $w\begin{pmatrix} 
	O&A\\ B&O
	\end{pmatrix} = \frac{1}{2} \left \|\Re(A)+{\rm i}\,\Im(B) \right \|,$ then
	$\|A+B^*\|=\|A-B^*\|= \left \|\Re(A)+{\rm i}\,\Im(B) \right \|$.\\
(ii) If $w\begin{pmatrix} 
	O&A\\ B&O
\end{pmatrix} = \frac{1}{2} \left \|\Re(B)+{\rm i}\,\Im(A) \right \|$, then 
$\|A+B^*\|=\|A-B^*\|= \left \|\Re(B)+{\rm i}\,\Im(A) \right \|$. 

\end{proposition}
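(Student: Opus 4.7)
The plan is to extract the proposition from the chain of inequalities that established Theorem \ref{th2} by identifying exactly which steps must collapse to equalities under the hypothesis. Specifically, the derivation of Theorem \ref{th2} produces the bound
\[
w\begin{pmatrix}O&A\\ B&O\end{pmatrix} \geq \tfrac{1}{2}\|\Re(A)+{\rm i}\,\Im(B)\| + \tfrac{1}{4}\big|\,\|A+B^*\|-\|A-B^*\|\,\big|,
\]
so the hypothesis of (i) forces the gap term $\tfrac{1}{4}\big|\,\|A+B^*\|-\|A-B^*\|\,\big|$ to vanish, which immediately yields the first half of the conclusion $\|A+B^*\|=\|A-B^*\|$.

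Having secured this, the next step is to pin down the common value. From the standard lower bound $w\begin{pmatrix}O&A\\ B&O\end{pmatrix}\geq \left\|\Re\begin{pmatrix}O&A\\ B&O\end{pmatrix}\right\|=\tfrac{1}{2}\|A+B^*\|$, together with the equality assumption, one obtains $\|A+B^*\|\leq \|\Re(A)+{\rm i}\,\Im(B)\|$. For the reverse inequality, apply the triangle inequality
\[
\|A+B^*\| + \|A^*-B\| \;\geq\; \|(A+B^*)+(A^*-B)\| \;=\; 2\,\|\Re(A)-{\rm i}\,\Im(B)\| \;=\; 2\,\|\Re(A)+{\rm i}\,\Im(B)\|,
\]
and invoke $\|A^*-B\|=\|A-B^*\|=\|A+B^*\|$ (the first equality using $\|T\|=\|T^*\|$, the second from the previous step) to conclude $\|A+B^*\|\geq \|\Re(A)+{\rm i}\,\Im(B)\|$. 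Combining both directions closes part (i).

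Part (ii) follows as a direct consequence of part (i): the weak unitary invariance identity $w\begin{pmatrix}O&A\\ B&O\end{pmatrix}=w\begin{pmatrix}O&B\\ A&O\end{pmatrix}$ observed just before Theorem \ref{th2} lets us apply part (i) with $A$ and $B$ interchanged, after which the identities $\|B+A^*\|=\|A+B^*\|$ and $\|B-A^*\|=\|A-B^*\|$ translate the conclusion into the desired form. No step is delicate; the only mild bookkeeping point is the harmless interchange between $\Re(A)-{\rm i}\,\Im(B)$ and $\Re(A)+{\rm i}\,\Im(B)$, which have the same norm since they are mutual adjoints. Thus the proof is essentially a careful re-reading of the saturation conditions in the proof of Theorem \ref{th2}, and I anticipate no real obstacle.
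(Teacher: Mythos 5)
Your proposal is correct and follows essentially the same route as the paper: the gap term in inequality (\ref{lower1}) forces $\|A+B^*\|=\|A-B^*\|$, and the common value is identified by sandwiching $\tfrac12\|A+B^*\|\leq w \leq \tfrac12\|\Re(A)+{\rm i}\,\Im(B)\|\leq \tfrac14(\|A+B^*\|+\|A-B^*\|)$ via the same triangle-inequality decomposition (you apply it to the adjoint $\Re(A)-{\rm i}\,\Im(B)$, the paper to $\Re(A)+{\rm i}\,\Im(B)$ directly, which is immaterial). Part (ii) by swapping $A$ and $B$ is exactly the paper's argument as well.
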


\begin{proof}
  Suppose $w\begin{pmatrix} 
	O&A\\ B&O
	\end{pmatrix} = \frac{1}{2} \left \|\Re(A)+{\rm i}\,\Im(B) \right \|.$  Then from the inequality (\ref{lower1}) it follows that 
	$\|A+B^*\|=\|A-B^*\|$. Therefore,
	\begin{eqnarray*}
	\frac{1}{2}\|A+B^*\|&\leq& w\begin{pmatrix} 
		O&A\\ B&O
	\end{pmatrix} \\
& = & \frac{1}{2} \left \|\Re(A)+{\rm i}\,\Im(B) \right \| \\
&=& \frac{1}{2} \left \| \frac{A+A^*}{2}+{\rm i}\,\frac{B-B^*}{2{\rm i}} \right \| \\
&\leq&  \frac{1}{2}  \left (  \frac{1}{2} \|A-B^*\|+ \frac{1}{2} \|A+B^*\| \right )\\
&=&  \frac{1}{2} \|A+B^*\| \,\,\,\,\,\,\Big( \textit{since}\,\,\|A+B^*\|=\|A-B^*\|\Big).
	\end{eqnarray*}
This completes the proof of (i). 
The proof of (ii) follows from (i) by interchanging $A$ and $B$.
\end{proof}

Finally, as a consequence of Theorem \ref{th2} we obtain the following inequality.

\begin{cor}\label{cor2}
	If $A,B\in \mathbb{B}(\mathscr{H})$, then 
	$$ w\begin{pmatrix} 
	O&A\\ B&O
	\end{pmatrix} \geq \frac{1}{4}\|A+B\|+\frac{1}{4}|a-b|+\frac{1}{4} \Big| \|A+B^*\|-\|A-B^*\|\Big|,$$
	where $a= \left \|\Re(A)+{\rm i}\,\Im(B) \right \|,b= \left \|\Re(B)+{\rm i}\,\Im(A) \right \|.$
	This inequality is sharp.
\end{cor}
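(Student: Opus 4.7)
The plan is to deduce the inequality directly from Theorem \ref{th2} by combining its two lower bounds (\ref{lower1}) and (\ref{lower2}). Writing $W := w\begin{pmatrix} O & A \\ B & O \end{pmatrix}$ and $C := \frac{1}{4}\big|\,\|A+B^*\|-\|A-B^*\|\,\big|$ for brevity, those two bounds read $W \geq \tfrac{1}{2}a + C$ and $W \geq \tfrac{1}{2}b + C$. Taking the larger of the two yields $W \geq \tfrac{1}{2}\max\{a,b\} + C$.

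Next I would invoke the elementary identity $\max\{a,b\} = \tfrac{1}{2}(a+b) + \tfrac{1}{2}|a-b|$ to rewrite this as $W \geq \tfrac{1}{4}(a+b) + \tfrac{1}{4}|a-b| + C$. The only remaining analytic step is to prove the lower bound $a + b \geq \|A+B\|$, which I would obtain from the triangle inequality together with the observation that $(\Re(A)+{\rm i}\,\Im(B)) + (\Re(B)+{\rm i}\,\Im(A)) = \Re(A+B) + {\rm i}\,\Im(A+B) = A+B$, using the identity $T = \Re(T) + {\rm i}\,\Im(T)$ that every operator satisfies. Substituting $a+b \geq \|A+B\|$ into the previous display gives precisely the stated bound.

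For sharpness, I would take $A = B = I$ on any Hilbert space. Then $\Im(A) = \Im(B) = O$, so $a = b = \|I\| = 1$ and $|a-b|=0$; also $\|A+B\| = 2$, $\|A+B^*\| = 2$, and $\|A-B^*\| = 0$, so the right-hand side evaluates to $\tfrac{1}{2} + 0 + \tfrac{1}{2} = 1$. On the other hand, Lemma \ref{lem5} gives $w\begin{pmatrix} O & I \\ I & O \end{pmatrix} = \|I\| = 1$, so equality holds. There is no serious obstacle here: the argument is essentially a two-step algebraic reorganization of the bounds of Theorem \ref{th2}, and the ``hard'' part — replacing $\tfrac{1}{4}(a+b)$ by $\tfrac{1}{4}\|A+B\|$ — is settled by a single application of the triangle inequality.
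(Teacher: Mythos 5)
Your argument is correct and follows essentially the same route as the paper: take the maximum of the two bounds from Theorem \ref{th2}, apply the identity $\max\{a,b\}=\tfrac{1}{2}(a+b)+\tfrac{1}{2}|a-b|$, and replace $a+b$ by $\|A+B\|$ via the triangle inequality and the decomposition $T=\Re(T)+{\rm i}\,\Im(T)$. You additionally supply the explicit sharpness example $A=B=I$, which the paper leaves as ``trivial''; that detail checks out.
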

\begin{proof}
	It follows from (\ref{lower1}) and (\ref{lower2}) that 
	\begin{eqnarray}\label{lower3}
	w\begin{pmatrix} 
	O&A\\ B&O
	\end{pmatrix} \geq \frac{1}{2} \max \{a,b\}+ \frac{1}{4} \Big| \|A+B^*\|-\|A-B^*\|\Big|.
	\end{eqnarray}
	Now, 
\begin{eqnarray}\label{lower4}
	\max \{a,b\} & = & \frac{1}{2}(a+b)+\frac{1}{2}|a-b|\geq \frac{1}{2}\|A+B\|+\frac{1}{2}|a-b|.
\end{eqnarray}
Therefore, combining (\ref{lower4}) with (\ref{lower3}) we infer that the desired inequality. The proof of sharpness is trivial.
\end{proof}

We end this section with the following result.
\begin{proposition}
	Let $A,B\in \mathbb{B}(\mathscr{H})$. If $w\begin{pmatrix} 
	O&A\\ B&O
	\end{pmatrix} = \frac{1}{4}\|A+B\|$, then the following results hold:
	
	(i) $\|A\|=\|B\|$.
	
	(ii) $\|A+B^*\|=\|A-B^*\|$.
	
	(iii) $\left \|\Re(A)+{\rm i}\,\Im(B) \right \|= \left \|\Re(B)+{\rm i}\,\Im(A) \right \|.$
	
\end{proposition}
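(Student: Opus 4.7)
The plan is to read off (ii) and (iii) directly from the chain of inequalities used in Corollary \ref{cor2}, and then combine the equality with Corollary \ref{cor_applLAA} to deduce (i).

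More precisely, recall that Corollary \ref{cor2} (with $a=\|\Re(A)+{\rm i}\,\Im(B)\|$ and $b=\|\Re(B)+{\rm i}\,\Im(A)\|$) states
\[
w\begin{pmatrix} O&A\\ B&O\end{pmatrix}\geq \tfrac{1}{4}\|A+B\|+\tfrac{1}{4}|a-b|+\tfrac{1}{4}\Big|\|A+B^*\|-\|A-B^*\|\Big|.
\]
Under the equality hypothesis $w\begin{pmatrix} O&A\\ B&O\end{pmatrix}=\tfrac{1}{4}\|A+B\|$, both nonnegative terms $|a-b|$ and $\bigl|\|A+B^*\|-\|A-B^*\|\bigr|$ must vanish. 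The first equality gives (iii) and the second gives (ii).

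For (i), I would feed (ii) into Corollary \ref{cor_applLAA}, which says
\[
w\begin{pmatrix} O&A\\ B&O\end{pmatrix}\geq \tfrac{1}{2}\max\{\|A\|,\|B\|\}+\tfrac{1}{4}\Big|\|A+B^*\|-\|A-B^*\|\Big|.
\]
Since the last term is zero by (ii), combining with the assumed equality yields
\[
\tfrac{1}{4}\|A+B\|\geq \tfrac{1}{2}\max\{\|A\|,\|B\|\},
\]
i.e., $\|A+B\|\geq 2\max\{\|A\|,\|B\|\}\geq \|A\|+\|B\|$. Since the triangle inequality forces $\|A+B\|\leq \|A\|+\|B\|$, every inequality in this chain is an equality; in particular $\|A\|+\|B\|=2\max\{\|A\|,\|B\|\}$, which forces $\min\{\|A\|,\|B\|\}=\max\{\|A\|,\|B\|\}$, i.e., $\|A\|=\|B\|$.

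There is no real obstacle here; the proposition is essentially a ``collecting equality cases'' statement, and the main task is simply to identify which nonnegative terms in the already-proved chains of inequalities must vanish. The only mildly non-routine step is (i), where one must notice that the same trick can be applied with Corollary \ref{cor_applLAA} in place of Corollary \ref{cor2} and that the resulting bound, together with the triangle inequality, pins $\|A\|$ and $\|B\|$ together.
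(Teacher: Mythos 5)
Your proposal is correct and is essentially the paper's own argument: the paper likewise reads (ii) and (iii) off the two nonnegative terms in Corollary \ref{cor2}, and obtains (i) from Corollary \ref{cor_applLAA} via the chain $\|A+B\|\geq 2\max\{\|A\|,\|B\|\}\geq\|A\|+\|B\|\geq\|A+B\|$ forced into equality. The only cosmetic difference is that for (i) you invoke (ii) to kill the last term, whereas one can simply drop that term since it is nonnegative; either way the argument is sound.
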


\begin{proof}
	The proof of (i) follows from (\ref{applLAA}). The proofs of (ii) and (iii) follow from Corollary \ref{cor2}.
\end{proof}

\bibliographystyle{amsplain}

\end{document}